\providecommand{\U}[1]{\protect\rule{.1in}{.1in}}
\newtheorem{theorem}{Theorem}
\newtheorem{corollary}[theorem]{Corollary}
\newtheorem{definition}[theorem]{Definition}
\newtheorem{lemma}[theorem]{Lemma}
\newtheorem{proposition}[theorem]{Proposition}
\newtheorem{remark}[theorem]{Remark}
\newenvironment{proof}[1][Proof]{\noindent\textbf{#1.} }{\ \rule{0.5em}{0.5em}}
\begin{document}

\title{Born--Jordan Pseudodifferential Operators and the Dirac Correspondence: Beyond
the Groenewold--van Hove Theorem}
\author{Maurice A. de Gosson\thanks{maurice.de.gosson@univie.ac.at}\\University of Vienna\\Faculty of Mathematics, NuHAG\\Oskar-Morgenstern-Platz 1, 1090 Vienna, Austria
\and Fabio Nicola\thanks{fabio.nicola@polito.it}\\Dipartimento di Scienze Matematiche\\Politecnico di Torino, \\Corso Duca degli Abruzzi 24, 10129 Torino, Italy}
\maketitle

\begin{abstract}
Quantization procedures play an essential role in microlocal analysis,
time-frequency analysis and, of course, in quantum mechanics. Roughly speaking
the basic idea, due to Dirac, is to associate to any symbol, or observable,
$a(x,\xi)$ an operator $\mathrm{Op}(a)$, according to some axioms dictated by
physical considerations. This led to the introduction of a variety of
quantizations. They all agree when the symbol $a(x,\xi)=f(x)$ depends only on
$x$ or $a(x,\xi)=g(\xi)$ depends only on $\xi$:
\[
\mathrm{Op}(f\otimes1)u=fu,\quad\mathrm{Op}(1\otimes g)u=\mathcal{F}%
^{-1}(g\mathcal{F}u)
\]
where $\mathcal{F}$ stands for the Fourier transform. Now, Dirac aimed at
finding a quantization satisfying, in addition, the key correspondence
\[
\lbrack\mathrm{Op}(a),\operatorname*{Op}(b)]=i\mathrm{Op}(\{a,b\})
\]
where $[\,,\,]$ stands for the commutator and $\{\,,\}$ for the Poisson
brackets, which would represent a tight link between classical and quantum
mechanics. Unfortunately, the famous Groenewold--van Hove theorem states that
such a quantization does not exist, and indeed most quantization rules satisfy
this property only approximately.

Now, in this note we show that the above commutator rule in fact holds for the
Born-Jordan quantization, at least for symbols of the type $f(x)+g(\xi)$.
Moreover we will prove that, remarkably, this property completely
characterizes this quantization rule, making it the quantization which best
fits the Dirac dream.


\end{abstract}

\section{Introduction}

The theory of pseudodifferential operators has had many avatars since its
inception in the mid 1960s; it has developed into a major branch of operator
theory since the pioneering work of R. Beals, H. Duistermaat, C. Fefferman, L.
H\"{o}rmander, J. J. Kohn, R. Melrose, L. Nirenberg, M. A. Shubin, M. E.
Taylor, and many others. One early precursor, having its origin in quantum
mechanics, and which gained its mathematical \textit{lettres de noblesse
}only\textit{ } in 1979 following the work of H\"{o}rmander \cite{horweyl}, is
the theory of Weyl operators. It was observed by Stein \cite{Stein},
\S 75--7.6, that the Weyl pseudodifferential calculus is \textit{uniquely
characterized} by its symplectic covariance with respect to conjugation with
metaplectic operators (see Wong \cite{wong} for a detailed proof of this
property). In the present paper we consider another class of
pseudodifferential operators, which is a relative newcomer in the mathematical
literature, and which we are going to characterize in terms of the so-called
Dirac correspondence. These operators are the Born--Jordan pseudodifferential
operators familiar to mathematicians working in quantization problems and in
time-frequency analysis. They can be defined as follows (we will give
alternative definitions as we go): assuming here for simplicity that
$a\in\mathcal{S}(\mathbb{R}^{2n})$ we define for, $\tau\in\mathbb{R}$, the
Shubin operator $\operatorname*{Op}_{\tau}(a)$ by%
\[
\operatorname*{Op}\nolimits_{\tau}(a)u(x)=(2\pi)^{-n}\int e^{i\langle
x-y,\xi\rangle}a((1-\tau)x+\tau y)u(y)dyd\xi
\]
for $u\in\mathcal{S}(\mathbb{R}^{n})$ (the case $\tau=\frac{1}{2}$
corresponding to Weyl operators).

The Born--Jordan operator $\operatorname*{Op}\nolimits_{\mathrm{BJ}}(a)$ with
symbol $a$ is then, by definition, the average%
\[
\operatorname*{Op}\nolimits_{\mathrm{BJ}}(a)=\int_{0}^{1}\operatorname*{Op}%
\nolimits_{\tau}(a)d\tau.
\]
Now, for a given quantization rule $a\longmapsto\operatorname*{Op}(a), $
regarded as a mapping
\[
\mathcal{S}^{\prime}(\mathbb{R}^{2n})\longrightarrow\mathcal{L(S}%
(\mathbb{R}^{n}),\mathcal{S}^{\prime}(\mathbb{R}^{n})),
\]
natural desirable properties are the formulas \eqref{one} and \eqref{two} below:%

\begin{equation}
\operatorname*{Op}(f\otimes1)u=fu,\quad\operatorname*{Op}(1\otimes
f)u=\mathcal{F}^{-1}(f\mathcal{F}u) \label{one}%
\end{equation}
for $u\in\mathcal{S}(\mathbb{R}^{n})$, where $\mathcal{F}$ stands for the
Fourier transform in $\mathbb{R}^{n}$, and
\begin{equation}
\lbrack\operatorname*{Op}(a),\operatorname*{Op}(b)]=i\operatorname*{Op}%
(\{a,b\}) \label{two}%
\end{equation}
where $[\cdot,\cdot]$ is the commutator and $\{a,b\}$ the Poisson bracket
associated with the standard symplectic form.

While property (\ref{one}) is a natural requirement for any honest
pseudodifferential calculus, property (\ref{two}) (which is closely related to
the physicists \textquotedblleft Dirac correspondence\textquotedblright) is of
a slightly more subtle nature. For a better understanding of the importance of
this property we have to briefly recall the notion of prequantization (see
Gotay \cite{62}, Gotay et al. \cite{61}, and Tuynman \cite{tuy} for detailed
discussions of the state of the art; also see Berndt \cite{berndt}; Englis
\cite{englis} discusses in a short paper the existence of nonlinear
quantizations and Abraham Marsden \cite{AM} address the question from a more
function-theoretical point of view). One requires that if $a\longmapsto
\operatorname*{Op}(a)$ is a continuous linear mapping associating to a real
symbol $a$ on $\mathbb{R}^{2n}$ a symmetric operator $\operatorname*{Op}(a)$
defined on some dense subspace of $L^{2}(\mathbb{R}^{n})$ then this mapping
should satisfy, in addition to some other axioms, the condition \eqref{two}.
It turns out that it is in principle impossible to achieve this goal; this
impossibility is the famous result of Groenewold \cite{groenewold}, later
completed by van Hove \cite{vanhove1,vanhove2}, which is a \textquotedblleft
no-go\textquotedblright\ result. It says (in its strong form) that one cannot
quantize the Poisson algebra of polynomials in $\mathbb{R}^{n}$, beyond those
of degree $\leq2$ (we briefly discuss this at the end of the paper).

In the present note we will show that:

\begin{itemize}
\item {} \textit{This obstruction can be bypassed if one limits oneself to
symbols of the type
\begin{equation}
a(x,\xi)=f(x)+g(\xi)\label{afg}%
\end{equation}
and choose $\operatorname*{Op}(a)=\operatorname*{Op}\nolimits_{\mathrm{BJ}%
}(a)$};

\item \textit{Conversely, $\operatorname*{Op}\nolimits_{\mathrm{BJ}}$ is the
only pseudodifferential quantization satisfying (\ref{two}) at least for
symbols of the type (\ref{afg}).}
\end{itemize}

In \eqref{afg} the functions $f,g$ are smooth and are allowed to grow at most
polynomially, together with their derivatives. Notice that for the Weyl
quantization $\mathrm{Op}_{1/2}$ the formula \eqref{two} holds, in general,
only for polynomial symbols of order $\leq2$.

Observe that assuming conditions \eqref{one} and \eqref{two}, at least for
symbols of the type \eqref{afg}, uniquely forces the values $\mathrm{Op}(a)$
when $a$ is in the linear space spanned of symbols of type \eqref{afg} and
their Poisson brackets. We will show that this space is dense in
$\mathcal{S}^{\prime}(\mathbb{R}^{2n})$ so that $\mathrm{Op}(a)$ is then
uniquely characterized by these properties; in fact, we have
$\operatorname*{Op}(a)=\operatorname*{Op}\nolimits_{\mathrm{BJ}}(a)$ for every
$a\in\mathcal{S}^{\prime}(\mathbb{R}^{2n})$.\par\smallskip

The importance of these results is double. First, the theory Born--Jordan
operators has recently gained considerable interest under the impetus of
mathematicians working in harmonic analysis \cite{13,13bis,26,transam} and
mathematical physicists \cite{Springer,golu,Kauffmann,73}. Secondly, as we
anticipated, it is intimately related to a mathematical question harking back
to the work of Groenewold \cite{groenewold} and van Hove
\cite{vanhove1,vanhove2} on quantization; we will discuss this at the end of
the paper.

This work is structured as follows: we review in Section \ref{sec1} the basic
properties of Born and Jordan's pseudodifferential calculus we will need to
prove our main results (Theorems \ref{Theorem1} and \ref{Theorem2}) in Section
\ref{sec2}. In Section \ref{secdisc} we discuss our results from the point of
view of quantization.

\subsubsection*{Notation}

We identify $\mathbb{R}^{n}$ with its dual $(\mathbb{R}^{n})^{\ast}$ and
$T^{\ast}\mathbb{R}^{n}$ with $\mathbb{R}^{2n}$; if $x\in\mathbb{R}^{n}$ and
$\xi\in\mathbb{R}^{n}$ we sometimes write $z=(x,\xi)$. The Schwartz space of
rapidly decreasing functions on $\mathbb{R}^{n}$ is denoted by $\mathcal{S}%
(\mathbb{R}^{n})$ and its dual (the tempered distributions) by $\mathcal{S}%
^{\prime}(\mathbb{R}^{n})$. We denote by $\delta_{(x^{\prime},\xi^{\prime})}$
the Dirac distribution centered at $(x^{\prime},\xi^{\prime})$, and by
$\mathcal{L(S}(\mathbb{R}^{n}),\mathcal{S}^{\prime}(\mathbb{R}^{n}))$ the
space of all continuous linear operators from $\mathcal{S}(\mathbb{R}^{n})$ to
$\mathcal{S}^{\prime}(\mathbb{R}^{n})$ (the continuity being understood in the
weak sense). The Euclidean scalar product of $x\in\mathbb{R}^{n}$ and $\xi
\in\mathbb{R}^{n}$ will be written $\langle x,\xi\rangle$. For $x=(x_{1}%
,\ldots,x_{n})$ we write $D=D_{x}=(D_{x_{1}},\ldots,D_{x_{n}})$ where
$D_{x_{j}}=-i\partial_{x_{j}}$ and $\langle x,D\rangle=x_{1}D_{x_{1}}%
+\cdot\cdot\cdot+x_{n}D_{x_{n}}$. The Fourier transform $\widehat{u}$ of
$u\in\mathcal{S}(\mathbb{R}^{n})$ is the function $\widehat{u}\in
\mathcal{S}(\mathbb{R}^{n})$ defined by%
\[
\widehat{u}(\xi)=\mathcal{F}u(\xi)=(2\pi)^{-n/2}\int e^{-i\langle\xi,x\rangle
}u(x)\,dx
\]
where $dx=dx_{1}\cdot\cdot\cdot dx_{n}$ is the usual Lebesgue measure on
$\mathbb{R}^{n}$. The standard symplectic form on $T^{\ast}\mathbb{R}%
^{n}=\mathbb{R}^{2n}$ is defined by $\sigma=d\xi_{1}\wedge dx_{1}+\cdot
\cdot\cdot+d\xi_{n}\wedge dx_{n}$; in coordinates
\[
\sigma(x,\xi;y,\eta)=\langle\xi,y\rangle-\langle\eta,x\rangle.
\]
The corresponding symplectic group is denoted by $\operatorname*{Sp}(n)$.

\section{Preliminary Material: Review\label{sec1}}

\subsection{The exponential of a linear form}

To make the definitions above rigorous, we have to give a precise sense to the
exponential operator $e^{i(\langle x^{\prime},x\rangle+\langle\xi^{\prime
},D\rangle)}$ and its variants. This can be done without any recourse to
operator functional calculus. Consider the Schr\"{o}dinger equation%
\begin{equation}
-i\frac{\partial v}{\partial t}=(\langle x_{0},x\rangle+\langle\xi
_{0},D\rangle)v \label{schr1}%
\end{equation}
with initial datum $u_{0}=v(\cdot,0)$ in $\mathcal{S}(\mathbb{R}^{n})$. Its
time-one solution $u_{1}=v(\cdot,1)$ is given by%
\[
u_{1}(x)=e^{i(\langle x_{0},x\rangle+\frac{1}{2}\langle x_{0},\xi_{0}\rangle
)}u_{0}(x+\xi_{0});
\]
writing formally the solution $u$ of (\ref{schr1}) as $e^{i(\langle
x_{0},x\rangle+\langle\xi_{0},D\rangle)t}u_{0}$ justifies the notation
\[
e^{i(\langle x_{0},x\rangle+\langle\xi_{0},D\rangle)}u_{0}=e^{i(\langle
x_{0},x\rangle+\frac{1}{2}\langle x_{0},\xi_{0}\rangle)}u_{0}(x+\xi_{0}).
\]
We will write $M(x_{0},\xi_{0})=e^{i(\langle x_{0},x\rangle+\langle\xi
_{0},D\rangle)}$; thus:%
\begin{equation}
M(x_{0},\xi_{0})u(x)=e^{i(\langle x_{0},x\rangle+\frac{1}{2}\langle x_{0}%
,\xi_{0}\rangle)}u(x+\xi_{0}). \label{M1}%
\end{equation}
Setting $x_{0}=0$ we have in particular $e^{i\langle\xi_{0},D\rangle
}u(x)=u(x+\xi_{0})$ for $u\in\mathcal{S}(\mathbb{R}^{n})$. One verifies by a
direct calculation using the definitions above that the \textquotedblleft
Baker--Campbell--Hausdorff formulas\textquotedblright\
\begin{equation}
M(x_{0},\xi_{0})=e^{-\frac{i}{2}\langle\xi_{0},x_{0}\rangle}e^{i\langle\xi
_{0},D\rangle}e^{i\langle x_{0},x\rangle}=e^{\frac{i}{2}\langle x_{0},\xi
_{0}\rangle}e^{i\langle x_{0},x\rangle}e^{i\langle\xi_{0},D\rangle}
\label{BKH}%
\end{equation}
hold. Notice that the operator $M(\xi_{0},-x_{0})$ is the Heisenberg operator
\[
T(x_{0},\xi_{0})=e^{i\sigma(x_{0},\xi_{0};x,D)}%
\]
\cite{Folland,108ter}, that is, $T(tx_{0},t\xi_{0})$ is the propagator of the
Schr\"{o}dinger equation $-i\partial_{t}v=\sigma(x_{0},\xi_{0};x,D)v$. The
operators $M(x_{0},\xi_{0})$ and $T(x_{0},\xi_{0})$ extend to continuous
operators $\mathcal{S}^{\prime}(\mathbb{R}^{n})\longrightarrow\mathcal{S}%
^{\prime}(\mathbb{R}^{n})$ whose restrictions to $L^{2}(\mathbb{R}^{n})$ are
unitary, and we have $M(x_{0},\xi_{0})^{\ast}=M(-x_{0},-\xi_{0})$,
$T(x_{0},\xi_{0})^{\ast}=T(-x_{0},-\xi_{0})$.

The Grossmann--Royer \cite{63,94} reflection operator $R(x_{0},\xi_{0})$ is
defined by%
\begin{equation}
R(x_{0},\xi_{0})=T(x_{0},\xi_{0})R(0,0)T(x_{0},\xi_{0})^{\ast} \label{GR1}%
\end{equation}
where $R(0,0)u(x)=u(-x)$. Explicitly, it is the unitary operator given by
\begin{equation}
R(x_{0},\xi_{0})u(x)=e^{2i\langle\xi_{0},x-x_{0}\rangle}u(2x_{0}-x).
\label{GR2}%
\end{equation}

\subsection{Weyl operators}

Let $A$ be a continuous linear operator $\mathcal{S}(\mathbb{R}^{n}%
)\longrightarrow\mathcal{S}^{\prime}(\mathbb{R}^{n})$. Using Schwartz's kernel
theorem (\cite{ho83}, Theorem 5.2.1) one shows that there exists a
distribution $K\in\mathcal{S}^{\prime}(\mathbb{R}^{n}\times\mathbb{R}^{n})$
such that $\langle Au,v\rangle=\langle K,\overline{v}\otimes u\rangle$ for all
$u,v\in\mathcal{S}(\mathbb{R}^{n})$; turning to integral notation, the
operator $A$ is thus formally given by%
\begin{equation}
Au(x)=\int K(x,y)u(y)dy. \label{aux}%
\end{equation}
The \textit{Weyl symbol} $a$ of $A$ is the tempered distribution on
$\mathbb{R}^{2n}$ given by the Fourier transform
\begin{equation}
a(x,\xi)=(2\pi)^{n/2}\int e^{-i\langle y,\xi\rangle}K(x+\tfrac{1}{2}%
y,x-\tfrac{1}{2}y)\,dy; \label{akern}%
\end{equation}
the action of the operator $A=\operatorname*{Op}_{\mathrm{W}}(a)$ on
$u\in\mathcal{S}(\mathbb{R}^{n})$ is thus given by the formula
\begin{equation}
Au(x)=(2\pi)^{-n}\int e^{i\langle x-y,\xi\rangle}a(\tfrac{1}{2}(x+y),\xi
)u(y)\,dyd\xi\label{Weyl2}%
\end{equation}
(the integral being interpreted in the distributional sense for $a\in
\mathcal{S}^{\prime}(\mathbb{R}^{2n})$). Performing the change of variables
$(y,\xi)\longmapsto(2x^{\prime}-x,\xi^{\prime})$ formula (\ref{Weyl2}) can be
rewritten in terms of the Grossmann--Royer operator (\ref{GR2}) as
\begin{equation}
Au=\pi^{-n}\int a(x^{\prime},\xi^{\prime})R(x^{\prime},\xi^{\prime})u\,dyd\xi.
\label{Weyl3}%
\end{equation}
Finally, applying the Parseval formula to the integral in (\ref{Weyl3}), we
get%
\begin{equation}
Au(x)=(2\pi)^{-n}\int\widehat{a}(x^{\prime},\xi^{\prime})M(x^{\prime}%
,\xi^{\prime})u(x)dyd\xi\label{Weyl4}%
\end{equation}
(see \cite{Springer}, \S 6.3.2).

\subsection{Shubin's $\tau$-operators}

Let $a\in\mathcal{S}(\mathbb{R}^{2n})$ and $\tau\in\mathbb{R}$; replacing the
definition (\ref{akern}) of the Weyl symbol with
\begin{equation}
a_{\tau}(x,\xi)=(2\pi)^{n/2}\int e^{-i\langle y,\xi\rangle}K(x+\tau
y,x-(1-\tau)y)\,dy \label{atau2}%
\end{equation}
we get the $\tau$-pseudodifferential operator (Shubin \cite{sh78}) $A_{\tau
}=\operatorname*{Op}_{\tau}(a)$:
\begin{equation}
A_{\tau}u(x)=(2\pi)^{-n}\int e^{i\langle x-y,\xi\rangle}a((1-\tau)x+\tau
y,\xi)u(y)\,dyd\xi; \label{atau1}%
\end{equation}
the case $\tau=\frac{1}{2}$ yields the Weyl operator $A=\operatorname*{Op}%
_{\mathrm{W}}(a)$. Equivalently, $A_{\tau}$ is the operator with Schwartz
kernel%
\begin{equation}
K_{\tau}(x,y)=(2\pi)^{-n}\int e^{i\langle x-y ,\xi\rangle}a((1-\tau)x+\tau
y,\xi)\,d\xi. \label{katau}%
\end{equation}
The operator $A_{\tau}$ is a continuous linear mapping $\mathcal{S}%
(\mathbb{R}^{n})\longrightarrow\mathcal{S}^{\prime}(\mathbb{R}^{n})$;
conversely, for every $\tau\in\mathbb{R}$, every such operator $A$ is a Shubin
$\tau$-operator, the $\tau$-symbol $a_{\tau}$ of $A$ being the distribution on
$\mathbb{R}^{2n}$ defined by the Fourier transform where $K$ is the Schwartz
kernel of $A$. One shows (\cite{Springer}, \S 9.2.1 and 9.3.1) that, as in the
case of Weyl operators, the operator $A_{\tau}$ can be written
\begin{equation}
A_{\tau}u=\pi^{-n}\int a(x^{\prime},\xi^{\prime})R_{\tau}(x^{\prime}%
,\xi^{\prime})u\, dx^{\prime}d\xi^{\prime}%
\end{equation}
where $R_{\tau}$ is given, for $\tau\neq\frac{1}{2}$, by
\begin{gather}
R_{\tau}(x^{\prime},\xi^{\prime})u=(\Theta_{\tau}\ast R(x^{\prime},\xi
^{\prime}))u\label{rotau}\\
\Theta_{\tau}(x,\xi)=\frac{2^{n}}{|2\tau-1|^{n}}\exp\left(  \frac{2i}{2\tau
-1}\langle x,\xi\rangle\right)  \label{thetatau}%
\end{gather}
and $\Theta_{1/2}(x,\xi)=\delta_{(x,\xi)}$. The Fourier decomposition of
$A_{\tau}$ is then given by
\begin{equation}
A_{\tau}u(x)=(2\pi)^{-n}\int\widehat{a}(x^{\prime},\xi^{\prime})M_{\tau
}(x^{\prime},\xi^{\prime})u(x)\,dx^{\prime}d\xi^{\prime} \label{atau3}%
\end{equation}
where, by definition,%
\begin{equation}
M_{\tau}(x,\xi)=e^{\frac{i}{2}(2\tau-1)\langle x,\xi\rangle}M(x,\xi).
\label{mtau}%
\end{equation}

It is convenient for our purposes to introduce the Shubin symbol classes
$\Gamma_{\rho}^{m}(\mathbb{R}^{2n})$ (Shubin, \cite{sh78}, \S 23). By
definition $a\in\Gamma_{\rho}^{m}(\mathbb{R}^{2n})$ ($m\in\mathbb{R}$,
$0<\rho\leq1$) if $a\in C^{\infty}(\mathbb{R}^{2n})$ and if for every
$\alpha\in\mathbb{N}^{2n}$ there exists $C_{\alpha}\geq0$ such that
\[
|D_{(x,\xi)}^{\alpha}a(x,\xi)|\leq C_{\alpha}(1+|x|+|\xi|)^{m-\rho|\alpha|}.
\]
Every polynomial $a$ of degree $m$ in the variables $x_{1},\ldots,x_{n}%
,\xi_{1},\ldots,\xi_{n}$ belongs to $\Gamma_{1}^{m}(\mathbb{R}^{2n})$. Using
standard estimates it is easy to check that if $a\in\Gamma_{\rho}%
^{m}(\mathbb{R}^{2n})$ then $A_{\tau}=\operatorname*{Op}_{\tau}(a)$ maps
continuously $\mathcal{S}(\mathbb{R}^{n})$ into $\mathcal{S}(\mathbb{R}^{n})$.

\subsection{Born--Jordan operators}

For $a\in\mathcal{S}(\mathbb{R}^{2n})$ the Born--Jordan operator
$A_{\mathrm{BJ}}=\operatorname*{Op}_{\mathrm{BJ}}(a)$ is the operator with
kernel $K_{\mathrm{BJ}}=\int_{0}^{1}K_{\tau}d\tau$ where $K_{\tau}$ is given
by (\ref{katau}); equivalently $A_{\mathrm{BJ}}=\int_{0}^{1}A_{\tau}d\tau$
where $A_{\tau}=\operatorname*{Op}_{\tau}(a)$. Using formulas (\ref{atau3})
and (\ref{mtau}) it is straightforward to obtain the Fourier decomposition of
$A_{\mathrm{BJ}}$:
\begin{equation}
A_{\mathrm{BJ}}u(x)=(2\pi)^{-n}\int\widehat{a}(x^{\prime},\xi^{\prime
})M_{\mathrm{BJ}}(x^{\prime},\xi^{\prime})u(x)\,dx^{\prime}d\xi^{\prime}
\label{abj}%
\end{equation}
with%
\begin{equation}
M_{\mathrm{BJ}}(x,\xi)=\operatorname{sinc}(\tfrac{1}{2}\langle x,\xi
\rangle)M(x,\xi) \label{absinc}%
\end{equation}
where, as usual, $\operatorname{sinc}t=\sin(t)/t$.

The terminology \textquotedblleft Born--Jordan operator\textquotedblright%
\ comes from the following observation: choose $n=1$ and assume that
$a_{r,s}(x,\xi)=x^{r}\xi^{s}$ where $r$ and $s$ are positive integers. Then
one has (\cite{Springer}, \S 9.1.2)
\[
\operatorname*{Op}\nolimits_{\tau}(a_{r,s})=\sum_{k=0}^{r}\binom{r}{k}\tau
^{k}(1-\tau)^{r-k}x^{k}D^{s}x^{r-k}.
\]
Integrating both sides of this equality from $0$ to $1$ with respect to $\tau$
we get, using the properties of the beta function,%
\begin{equation}
\operatorname*{Op}\nolimits_{\mathrm{BJ}}(a_{r,s})=\frac{1}{r+1}\sum_{k=0}%
^{r}x^{k}D^{s}x^{r-k} \label{BJmono1}%
\end{equation}
which is Born and Jordan's \textquotedblleft quantization
rule\textquotedblright\ \cite{16}. The following remark is important: one
proves by induction that%

\begin{equation}
\lbrack x^{r+1},D_{x}^{s+1}]=(s+1)i\sum\limits_{j=0}^{r}x^{r-j}D^{s}x^{j}
\label{commut1}%
\end{equation}
hence formula (\ref{BJmono1}) can be rewritten%
\begin{equation}
\operatorname*{Op}\nolimits_{\mathrm{BJ}}(a_{r,s})=\frac{1}{i(r+1)(s+1)}%
[x^{r+1},D^{s+1}]. \label{fundamental}%
\end{equation}

\begin{remark}
\label{oss} This identity is remarkable because it shows that Born--Jordan
operators with polynomial symbols in the $x,\xi$ variables can be expressed as
a sum of commutators (see in the context the paper \cite{JCPain} by Pain) and
that Born-Jordan quantization enjoys \eqref{two} at least for monomial symbols
in dimension 1. In other terms, the operators $\operatorname*{Op}%
\nolimits_{\mathrm{BJ}}(a_{r,s})$ are uniquely determined by the quantization
of monomials depending only on $x$ or $\xi$. We refer to Theorem
\ref{Theorem2} and Remark \ref{osservazione} below for the general case of
distribution symbols in arbitrary dimension.
\end{remark}

An important observation is the following: the adjoint of $A_{\mathrm{BJ}%
}=\operatorname*{Op}\nolimits_{\mathrm{BJ}}(a)$ with respect to the
sesquilinear product
\[
(u|v)=\int u(x)\overline{v}(x)dx
\]
on $\mathcal{S}(\mathbb{R}^{n})$ is the operator $A_{\mathrm{BJ}}^{\ast
}=\operatorname*{Op}\nolimits_{\mathrm{BJ}}(\overline{a})$ (hence
$A_{\mathrm{BJ}}$ is self-adjoint when $a$ is real). This follows from the
fact that $A_{\tau}^{\ast}=\operatorname*{Op}_{1-\tau}(\overline{a})$ if
$A_{\tau}=\operatorname*{Op}_{\tau}(a)$ (see \cite{transam,Springer}).

While the linear mapping%
\[
\operatorname*{Op}\nolimits_{\mathrm{W}}:\mathcal{S}^{\prime}(\mathbb{R}%
^{2n})\longrightarrow\mathcal{L(S}(\mathbb{R}^{n}),\mathcal{S}^{\prime
}(\mathbb{R}^{n}))
\]
which to every symbol $a\in\mathcal{S}^{\prime}(\mathbb{R}^{2n})$ associates
the corresponding Weyl operator $A=\operatorname*{Op}\nolimits_{\mathrm{W}%
}(a)$ is a continuous isomorphism \cite{Folland,ho85,Stein,wong}, this is not
true of the mapping
\[
\operatorname*{Op}\nolimits_{\mathrm{BJ}}:\mathcal{S}^{\prime}(\mathbb{R}%
^{2n})\longrightarrow\mathcal{L(S}(\mathbb{R}^{n}),\mathcal{S}^{\prime
}(\mathbb{R}^{n}))
\]
because it is not injective. In fact, set $m(x,\xi)=e^{i(\langle
x_{0},x\rangle+\langle\xi_{0},\xi\rangle)}$; we have $\widehat{m}=(2\pi
)^{n}\delta_{(x_{0},\xi_{0})}$ and hence by \eqref{abj} and \eqref{absinc} we
obtain
\begin{align}
\operatorname*{Op}\nolimits_{\mathrm{BJ}}(m)  &  =\int\delta_{(x_{0},\xi_{0}%
)}\operatorname{sinc}(\tfrac{1}{2}\langle x^{\prime},\xi^{\prime}%
\rangle)M(x^{\prime},\xi^{\prime})\,dx^{\prime}d\xi^{\prime}\nonumber\\
&  =\operatorname{sinc}(\tfrac{1}{2}\langle x_{0},\xi_{0}\rangle)M(x_{0}%
,\xi_{0}). \label{aggiunta}%
\end{align}
We thus have $\operatorname*{Op}\nolimits_{\mathrm{BJ}}(m)=0$ for all
$(x_{0},\xi_{0})$ such that $\langle x_{0},\xi_{0}\rangle\neq0$ and $\langle
x_{0},\xi_{0}\rangle\in2\pi\mathbb{Z}$. While the surjectivity of
$\operatorname*{Op}\nolimits_{\mathrm{W}}$ and $\operatorname*{Op}_{\tau}$ is
obvious using Schwartz's kernel theorem, the proof of the surjectivity of
$\operatorname*{Op}\nolimits_{\mathrm{BJ}}$ is rather tricky. The difficulty
comes from the following observation: for every $A\in\mathcal{L(S}%
(\mathbb{R}^{n}),\mathcal{S}^{\prime}(\mathbb{R}^{n}))$ there exists
$a\in\mathcal{S}^{\prime}(\mathbb{R}^{2n})$ such that $A=\operatorname*{Op}%
\nolimits_{\mathrm{W}}(a)$ hence the mapping $\operatorname*{Op}%
\nolimits_{\mathrm{BJ}}$ is surjective if and only if we can find
$b\in\mathcal{S}^{\prime}(\mathbb{R}^{2n})$ such that $\operatorname*{Op}%
\nolimits_{\mathrm{BJ}}(b)=\operatorname*{Op}\nolimits_{\mathrm{W}}(a)$.
Comparison of formulas (\ref{Weyl4}) and (\ref{abj}) shows that $b$ must be a
solution of the equation%
\[
\widehat{b}(x,\xi)\operatorname{sinc}(\tfrac{1}{2}\langle x,\xi\rangle
)=\widehat{a}(x,\xi);
\]
the determination of $\widehat{b}$ (and hence of $b$) thus requires a division
by the function $(x,\xi)\longmapsto\operatorname{sinc}(\tfrac{1}{2}\langle
x,\xi\rangle)$, which has infinitely many zeroes. We have proven in a recent
work \cite{28bis} with E. Cordero that the solution $\widehat{b}$ actually
exists in $\mathcal{S}^{\prime}(\mathbb{R}^{2n})$, but the method is quite
tricky and does not allow an explicit expression of $b$, neither does it allow
to produce any qualitative results about the regularity properties of $b$ in
terms of those of $a$. However, as we have shown in \cite{cogoni}, the
situation is much more satisfactory when one supposes that the symbol $a$
belongs to one of the Shubin symbol classes $\Gamma_{\rho}^{m}(\mathbb{R}%
^{2n})$. One has in this case the following result, which in a sense
trivializes Born--Jordan operators:

\begin{proposition}
If $\widehat{A}_{\mathrm{BJ}}=\operatorname*{Op}_{\mathrm{BJ}}(a)$ with
$a\in\Gamma_{\rho}^{m}(\mathbb{R}^{2n})$ there exists, for every $\tau
\in\mathbb{R}$, a symbol $a_{\tau}$ belonging to the same symbol class
$\Gamma_{\rho}^{m}(\mathbb{R}^{2n})$ such that $\widehat{A}_{\mathrm{BJ}%
}=\operatorname*{Op}_{\tau}(a_{\tau})$.

Conversely, for any given symbol $a_{\tau}\in\Gamma_{\rho}^{m}(\mathbb{R}%
^{2n})$ there exists a symbol $a\in\Gamma_{\rho}^{m}(\mathbb{R}^{2n})$ such
that $\operatorname*{Op}_{\mathrm{BJ}}(a)=\operatorname*{Op}_{\tau}(a_{\tau
})+R$ where $R$ is an operator with integral kernel in $\mathcal{S}%
(\mathbb{R}^{2n})$.
\end{proposition}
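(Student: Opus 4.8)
The plan is to deduce both statements from the Fourier decompositions \eqref{abj}, \eqref{absinc} of $\operatorname*{Op}\nolimits_{\mathrm{BJ}}$ and \eqref{atau3}, \eqref{mtau} of $\operatorname*{Op}\nolimits_{\tau}$, turning the proposition into a statement about a single Fourier multiplier. Using the elementary identity $\operatorname{sinc}(\tfrac{1}{2}t)=\int_{0}^{1}e^{i(s-1/2)t}\,ds$ with $t=\langle x,\xi\rangle$, these formulas show that for $a,b\in\mathcal{S}'(\mathbb{R}^{2n})$
\[
\operatorname*{Op}\nolimits_{\mathrm{BJ}}(a)=\operatorname*{Op}\nolimits_{\tau}(b)\iff\widehat{b}=\mathfrak{m}_{\tau}\,\widehat{a},\qquad\mathfrak{m}_{\tau}(x,\xi):=\int_{0}^{1}e^{i(s-\tau)\langle x,\xi\rangle}\,ds ;
\]
equivalently, writing $\Phi_{\tau}a:=\mathcal{F}^{-1}(\mathfrak{m}_{\tau}\widehat{a})$, one has $\Phi_{\tau}a=\int_{0}^{1}b_{s}\,ds$, where $b_{s}$ is the $\tau$-symbol of $\operatorname*{Op}\nolimits_{s}(a)$ (so $\widehat{b_{s}}=e^{i(s-\tau)\langle x,\xi\rangle}\widehat{a}$ by \eqref{mtau}), and $\operatorname*{Op}\nolimits_{\tau}(\Phi_{\tau}a)=\operatorname*{Op}\nolimits_{\mathrm{BJ}}(a)$ holds automatically. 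The key elementary features of $\mathfrak{m}_{\tau}$ are that it is entire in $\langle x,\xi\rangle$, bounded by $1$, and equal to $1$ at the origin. The first assertion then reduces to checking that $\Phi_{\tau}$ maps $\Gamma_{\rho}^{m}(\mathbb{R}^{2n})$ into itself: by the change-of-quantization formulas of the Shubin calculus (\cite{sh78}, \S 23) each $b_{s}$ lies in $\Gamma_{\rho}^{m}$, and since the $s$-dependence enters only through the linear factor $s-\tau$, the $\Gamma_{\rho}^{m}$-seminorms of $b_{s}$ are bounded uniformly for $s\in[0,1]$; hence $a_{\tau}:=\Phi_{\tau}a=\int_{0}^{1}b_{s}\,ds$ converges in $\Gamma_{\rho}^{m}$, and by continuity of $\operatorname*{Op}\nolimits_{\tau}$ together with the definition of $\operatorname*{Op}\nolimits_{\mathrm{BJ}}$ we get $\operatorname*{Op}\nolimits_{\tau}(a_{\tau})=\int_{0}^{1}\operatorname*{Op}\nolimits_{s}(a)\,ds=\operatorname*{Op}\nolimits_{\mathrm{BJ}}(a)$.

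For the converse it is enough to construct a right parametrix for $\Phi_{\tau}$ on the Shubin scale, i.e. a map $\Psi_{\tau}$ with $\Phi_{\tau}\Psi_{\tau}=\mathrm{Id}+R$, where $R$ sends $\Gamma_{\rho}^{m}$ into $\mathcal{S}(\mathbb{R}^{2n})$: indeed $a:=\Psi_{\tau}a_{\tau}\in\Gamma_{\rho}^{m}$ then satisfies $\operatorname*{Op}\nolimits_{\mathrm{BJ}}(a)=\operatorname*{Op}\nolimits_{\tau}(\Phi_{\tau}a)=\operatorname*{Op}\nolimits_{\tau}(a_{\tau})+\operatorname*{Op}\nolimits_{\tau}(Ra_{\tau})$, and by \eqref{katau} the operator $\operatorname*{Op}\nolimits_{\tau}$ of a Schwartz symbol has an integral kernel in $\mathcal{S}(\mathbb{R}^{2n})$. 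To build $\Psi_{\tau}$, I would use that $b_{s}$ admits, again by the Shubin calculus, the asymptotic expansion $b_{s}\sim\sum_{k\geq0}\tfrac{1}{k!}(i(s-\tau))^{k}\langle D_{x},D_{\xi}\rangle^{k}a$, with remainders controlled uniformly in $s\in[0,1]$, where $\langle D_{x},D_{\xi}\rangle=\sum_{j}D_{x_{j}}D_{\xi_{j}}$ lowers the Shubin order by $2\rho$. Integrating over $s$ yields $\Phi_{\tau}a\sim\sum_{k\geq0}c_{k}\langle D_{x},D_{\xi}\rangle^{k}a$ with $c_{0}=1$ and $c_{k}=\tfrac{i^{k}}{k!}\int_{0}^{1}(s-\tau)^{k}\,ds$, so $\Phi_{\tau}=\mathrm{Id}+L_{\tau}$ where $L_{\tau}$ maps $\Gamma_{\rho}^{m'}$ into $\Gamma_{\rho}^{m'-2\rho}$ for every $m'$. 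Hence $\mathrm{Id}+L_{\tau}$ is elliptic in this calculus, and $\Psi_{\tau}:=\sum_{j\geq0}(-L_{\tau})^{j}$, summed asymptotically by the Borel construction, is the required parametrix.

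The main obstacle — and the reason the converse holds only modulo $\mathcal{S}$ — is that $\mathfrak{m}_{\tau}$, equivalently $\operatorname{sinc}(\tfrac{1}{2}\langle x,\xi\rangle)$, vanishes on the hypersurfaces $\{\langle x,\xi\rangle\in2\pi\mathbb{Z}\setminus\{0\}\}$, which escape to infinity; thus $\mathfrak{m}_{\tau}$ is not an elliptic symbol on $\mathbb{R}^{2n}$, it cannot be inverted exactly, and the Neumann series $\sum_{j}(-L_{\tau})^{j}$ converges not as operators but only asymptotically. The argument is nevertheless saved by the fact that $\Phi_{\tau}$ is only ever applied to $\widehat{a}$ with $a\in\Gamma_{\rho}^{m}$, and such $\widehat{a}$ is singular only at the origin of $\mathbb{R}^{2n}$ — precisely where $\mathfrak{m}_{\tau}=1$ — so that multiplication by $\mathfrak{m}_{\tau}-1$ genuinely gains symbol orders there; the precise incarnation of this is the uniform remainder control in Shubin's change-of-quantization formula, and securing that control (together with the uniformity in $s$ already needed for the first part) is the technical heart of the proof.
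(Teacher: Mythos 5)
The paper itself does not prove this Proposition---it is quoted from the companion work \cite{cogoni}---so there is no internal proof to compare against; your sketch follows essentially the same route as that reference: reduction to the Fourier multiplier $\mathfrak{m}_{\tau}(x,\xi)=e^{-\frac{i}{2}(2\tau-1)\langle x,\xi\rangle}\operatorname{sinc}(\tfrac{1}{2}\langle x,\xi\rangle)$ acting on symbols, locally uniform boundedness of the change-of-quantization operators $e^{i(s-\tau)\langle D_{x},D_{\xi}\rangle}$ on $\Gamma_{\rho}^{m}(\mathbb{R}^{2n})$ for the direct part, and an asymptotically summed Neumann-series parametrix modulo $\bigcap_{N}\Gamma_{\rho}^{-N}=\mathcal{S}(\mathbb{R}^{2n})$ for the converse. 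Your outline is correct; the only substantive steps left unproved are precisely the ones you flag (the uniform-in-$s$ $\Gamma_{\rho}^{m}$-seminorm bounds and the uniform remainder control in the expansion of $e^{i(s-\tau)\langle D_{x},D_{\xi}\rangle}$), which are the technical content of \cite{cogoni}.
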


In particular, taking $\tau=\frac{1}{2}$, the operator $\widehat
{A}_{\mathrm{BJ}}=\operatorname*{Op}_{\mathrm{BJ}}(a)$ is a Weyl
pseudodifferential operator with symbol in the same Shubin class as $a$.

\section{The Characteristic Property\label{sec2}}

In this section we state and prove the main results of this paper.

\subsection{Born--Jordan quantization turns Poisson bracket into commutators}

Let $a,b\in\mathcal{C}^{\infty}(\mathbb{R}^{2n})$ and $X_{a},X_{b}$ the
corresponding Hamiltonian vector fields: $i_{X_{a}}\sigma+da=0$ and $i_{X_{b}%
}\sigma+db=0$. By definition the Poisson bracket of $a$ and $b$ is
$\{a,b\}=i_{X_{b}}i_{X_{a}}\sigma$. In coordinates, $X_{a}=(\partial_{\xi
}a,-\partial_{x}a)$ and $X_{b}=(\partial_{\xi}b,-\partial_{x}b)$, and%
\[
\{a,b\}=\sum_{|\alpha|=1}\partial_{x}^{\alpha}a\,\partial_{\xi}^{\alpha
}b-\partial_{x}^{\alpha}b\,\partial_{\xi}^{\alpha}a.
\]

Let us now define a convenient class of functions in $\mathbb{R}^{n}$.

\begin{definition}
\label{def2} Let $\mathcal{A}(\mathbb{R}^{n})$ be the space of all smooth
functions $f$ on $\mathbb{R}^{n}$ such that for every $\alpha\in\mathbb{N}%
^{n}$,
\[
|\partial^{\alpha}f(x)|\leq C_{\alpha}(1+|x|)^{m_{\alpha}}\quad x\in
\mathbb{R}^{n}
\]
for some constants $C_{\alpha}>0$ and $m_{\alpha}$ depending on $\alpha$.
\end{definition}

The relevance of this class of functions is that if $f\in\mathcal{A}%
(\mathbb{R}^{n})$ and $u\in\mathcal{S}(\mathbb{R}^{n})$ then $fu\in
\mathcal{S}(\mathbb{R}^{n})$.

\begin{theorem}
\label{Theorem1}Let $f\in\mathcal{A}(\mathbb{R}^{n})$ and $g\in\mathcal{A}%
(\mathbb{R}^{n})$; set $a=f\otimes1$ and $b=1\otimes g$. Then the operators
$\operatorname*{Op}\nolimits_{\mathrm{BJ}}(a)$, $\operatorname*{Op}%
\nolimits_{\mathrm{BJ}}(b)$ belong to $\mathcal{L(S}(\mathbb{R}^{n}%
),\mathcal{S}(\mathbb{R}^{n}))$ and we have
\begin{equation}
\lbrack\operatorname*{Op}\nolimits_{\mathrm{BJ}}(a),\operatorname*{Op}%
\nolimits_{\mathrm{BJ}}(b)]=i\operatorname*{Op}\nolimits_{\mathrm{BJ}%
}(\{a,b\}). \label{condbj}%
\end{equation}

\end{theorem}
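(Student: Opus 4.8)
\medskip

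The plan is to compute everything explicitly. Since $a=f\otimes 1$ depends only on $x$, property \eqref{one} (which, together with \eqref{BJmono1}, holds for Born--Jordan quantization) gives $\operatorname*{Op}\nolimits_{\mathrm{BJ}}(a)u=fu$, which indeed maps $\mathcal{S}(\mathbb{R}^{n})$ to $\mathcal{S}(\mathbb{R}^{n})$ because $f\in\mathcal{A}(\mathbb{R}^{n})$; similarly $\operatorname*{Op}\nolimits_{\mathrm{BJ}}(b)u=\mathcal{F}^{-1}(g\,\mathcal{F}u)=g(D)u$, which also preserves $\mathcal{S}(\mathbb{R}^{n})$ by the same polynomial-growth estimate on $g$ applied on the Fourier side. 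So the first assertion is essentially a consequence of \eqref{one} plus the definition of $\mathcal{A}(\mathbb{R}^{n})$. One should remark that \eqref{one} for $\operatorname*{Op}\nolimits_{\mathrm{BJ}}$ follows directly from \eqref{katau}: taking $a(x,\xi)=f(x)$ the kernel $K_{\tau}$ is independent of $\tau$ and equals $f(x)\,(2\pi)^{-n}\int e^{i\langle x-y,\xi\rangle}\,d\xi=f(x)\delta(x-y)$, and analogously for $a(x,\xi)=g(\xi)$.

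\medskip

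Next I would compute the two sides of \eqref{condbj}. The left-hand side is $[f,g(D)]$, a commutator of a multiplication operator and a Fourier multiplier, and this is a standard object: acting on $u\in\mathcal{S}(\mathbb{R}^{n})$,
\begin{equation}
[f,g(D)]u = f\cdot g(D)u - g(D)(fu).\label{planLHS}
\end{equation}
For the right-hand side, note $\{a,b\}=\{f\otimes 1,1\otimes g\}=\sum_{j=1}^{n}\partial_{x_{j}}f\,\partial_{\xi_{j}}g$, a function of \emph{both} $x$ and $\xi$ (each summand is a tensor product $(\partial_{x_{j}}f)\otimes(\partial_{\xi_{j}}g)$), so its Born--Jordan quantization is genuinely a pseudodifferential operator and I must use \eqref{abj}--\eqref{absinc} or, more conveniently, reduce to the one-dimensional monomial computation \eqref{BJmono1}--\eqref{fundamental} by a limiting/density argument, or better: prove the operator identity
\begin{equation}
[f,g(D)]=i\sum_{j=1}^{n}\operatorname*{Op}\nolimits_{\mathrm{BJ}}\big((\partial_{x_{j}}f)\otimes(\partial_{\xi_{j}}g)\big)\label{planGoal}
\end{equation}
directly. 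The cleanest route I see is to first establish \eqref{planGoal} for $f(x)=e^{i\langle x_0,x\rangle}$ and $g(\xi)=e^{i\langle \xi_0,\xi\rangle}$ pure exponentials: then $g(D)=e^{i\langle\xi_0,D\rangle}$ is translation by $\xi_0$, the left side of \eqref{planLHS} becomes $\big(e^{i\langle x_0,x\rangle}-e^{i\langle x_0,x+\xi_0\rangle}\big)e^{i\langle\xi_0,D\rangle}=\big(1-e^{i\langle x_0,\xi_0\rangle}\big)e^{i\langle x_0,x\rangle}e^{i\langle\xi_0,D\rangle}$, which by the Baker--Campbell--Hausdorff identity \eqref{BKH} equals $\big(1-e^{i\langle x_0,\xi_0\rangle}\big)e^{-\frac{i}{2}\langle x_0,\xi_0\rangle}M(x_0,\xi_0)=-2i\sin(\tfrac12\langle x_0,\xi_0\rangle)M(x_0,\xi_0)$. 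On the other side, $\partial_{x_j}f=i x_{0,j}e^{i\langle x_0,x\rangle}$ and $\partial_{\xi_j}g=i\xi_{0,j}e^{i\langle\xi_0,\xi\rangle}$, so $\{a,b\}=-\langle x_0,\xi_0\rangle e^{i(\langle x_0,x\rangle+\langle\xi_0,\xi\rangle)}$, and by the already-derived formula \eqref{aggiunta} its Born--Jordan quantization is $-\langle x_0,\xi_0\rangle\operatorname{sinc}(\tfrac12\langle x_0,\xi_0\rangle)M(x_0,\xi_0)=-2\sin(\tfrac12\langle x_0,\xi_0\rangle)M(x_0,\xi_0)$. Multiplying by $i$ matches the left side exactly. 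So \eqref{planGoal} holds for exponentials.

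\medskip

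Finally I would upgrade from exponentials to general $f,g\in\mathcal{A}(\mathbb{R}^{n})$ by a superposition/density argument: write $f$ and $g$ (or rather, $f$ on a compact set, after a cutoff) as superpositions $f(x)=\int e^{i\langle x_0,x\rangle}\,d\mu_f(x_0)$ via inverse Fourier transform, interpreted distributionally, and similarly for $g$; then both sides of \eqref{planGoal} are continuous (in the appropriate weak operator topology on $\mathcal{L}(\mathcal{S},\mathcal{S}')$, or even $\mathcal{L}(\mathcal{S},\mathcal{S})$) bilinear functionals of $(\widehat f,\widehat g)\in\mathcal{S}'\times\mathcal{S}'$, so the identity extends from the dense span of exponential pairs. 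The main obstacle I anticipate is making this last continuity/density step rigorous: one must check that $f\mapsto\operatorname*{Op}\nolimits_{\mathrm{BJ}}(f\otimes 1)$ and the various bilinear maps into $\operatorname*{Op}\nolimits_{\mathrm{BJ}}$ of a product-type symbol are sequentially continuous from $\mathcal{S}'(\mathbb{R}^{n})$ (resp.\ from $\mathcal{A}$ with its natural topology) into $\mathcal{L}(\mathcal{S}(\mathbb{R}^{n}),\mathcal{S}(\mathbb{R}^{n}))$, and that tested against any $u\in\mathcal{S}$ and paired with any $v\in\mathcal{S}$ everything converges; the growth bounds in Definition \ref{def2} are exactly what is needed to control $fu$, $g(D)u$ and their derivatives uniformly. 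An alternative to density that avoids this entirely is to verify \eqref{planGoal} by a direct kernel computation using \eqref{katau}: integrate $K_\tau$ for $a=f\otimes1$ against $K_\tau$ for $b=1\otimes g$, subtract the reversed product, perform the $\tau$-integration, and recognize the Born--Jordan kernel of $\{a,b\}$ via the beta-function identity already used to pass from Shubin's $\tau$-rule to \eqref{BJmono1}; this is more computational but self-contained, and I would present it as the backup if the functional-analytic setup becomes cumbersome.
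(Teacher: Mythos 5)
Your route is genuinely different in organization from the paper's, and its computational core is correct. The paper proves \eqref{condbj} for general $f,g\in\mathcal{A}(\mathbb{R}^{n})$ in one stroke: it writes $\operatorname*{Op}\nolimits_{\mathrm{BJ}}\bigl(\sum_{j}\partial_{x_{j}}f\otimes\partial_{\xi_{j}}g\bigr)u$ via the Fourier decomposition \eqref{abj}--\eqref{absinc}, uses $\widehat{\partial_{x_{j}}f}=i\xi_{j}\widehat f$ so that the factor $\langle x^{\prime},\xi^{\prime}\rangle$ cancels the sinc denominator, splits $\sin t=(e^{it}-e^{-it})/2i$, and after changes of variables and Fourier inversion recognizes exactly $f(\mathcal{F}^{-1}g\ast u)-\mathcal{F}^{-1}g\ast(fu)$, i.e.\ the commutator. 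Your computation for pure exponentials is precisely the integrand of that argument (and is, in substance, the computation the paper performs in the proof of Theorem \ref{Theorem2}, read backwards), so the algebra is right; what the paper's formulation buys is that $\widehat f,\widehat g$ stay under the integral sign throughout, so no separate limiting procedure over symbols is needed.

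The weak spot is the extension step, and as stated it is not correct: both sides of \eqref{planGoal} are \emph{not} continuous bilinear maps on $\mathcal{S}^{\prime}\times\mathcal{S}^{\prime}$, since for general $f,g\in\mathcal{S}^{\prime}(\mathbb{R}^{n})$ the compositions $f\cdot g(D)u$ and $g(D)(fu)$ involve products of two tempered distributions and are simply undefined. The argument can be repaired, but only as a two-step \emph{separate}-continuity argument: first fix $g=e^{i\langle\xi_{0},\cdot\rangle}$ (so $g(D)u\in\mathcal{S}$ and $g(D)$ acts on $\mathcal{S}^{\prime}$) and extend in $f$ from the span of exponentials to $\mathcal{S}^{\prime}$ using weak-$\ast$ sequential density (Lemma \ref{Lemma1}), the weak-$\ast$ continuity of $f\mapsto\langle f,\,v\,g(D)u\rangle$, $f\mapsto\langle f,\,u\,g(-D)v\rangle$, of $f\mapsto f\otimes h$ for fixed $h$, and of $a\mapsto\operatorname*{Op}\nolimits_{\mathrm{BJ}}(a)$; then fix $f\in\mathcal{A}(\mathbb{R}^{n})$ (so that $fu\in\mathcal{S}$ and multiplication by $f$ maps $\mathcal{S}^{\prime}$ to $\mathcal{S}^{\prime}$) and extend in $g$ the same way. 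None of these continuity statements is hard, but they are exactly the content you flagged as "the main obstacle" and left unproved, and the hinted cutoff-plus-measure superposition $f(x)=\int e^{i\langle x_{0},x\rangle}d\mu_{f}(x_{0})$ is not available for a general element of $\mathcal{A}$ whose Fourier transform is only a distribution. So either carry out the two-step density argument explicitly, or follow your own backup and do the computation with $\widehat f,\widehat g$ kept inside the oscillatory integrals --- which is what the paper does.
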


\begin{proof}
An elementary calculation shows that
\begin{equation}
\operatorname*{Op}\nolimits_{\mathrm{BJ}}(a)u=fu,\quad\operatorname*{Op}%
\nolimits_{\mathrm{BJ}}(b)u=\mathcal{F}^{-1}(g\mathcal{F}u)=(2\pi
)^{-n/2}\mathcal{F}^{-1}g\ast u. \label{opabu}%
\end{equation}
Since $f\in\mathcal{A}(\mathbb{R}^{n})$ the mapping $u\longmapsto fu$ is
continuous on $\mathcal{S}(\mathbb{R}^{n})$, so that $\operatorname*{Op}%
\nolimits_{\mathrm{BJ}}(a)\in\mathcal{L(S}(\mathbb{R}^{n}),\mathcal{S}%
(\mathbb{R}^{n}))$; similarly since $g\in\mathcal{A}(\mathbb{R}^{n})$ the map
$u\longmapsto\mathcal{F}^{-1}g\ast u\in\mathcal{S}(\mathbb{R}^{n})$ is
continuous on $\mathcal{S}(\mathbb{R}^{n})$ and $\operatorname*{Op}%
\nolimits_{\mathrm{BJ}}(b)\in\mathcal{L(S}(\mathbb{R}^{n}),\mathcal{S}%
(\mathbb{R}^{n}))$ as well.

Now, we have
\[
\operatorname*{Op}\nolimits_{\mathrm{BJ}}(\{a,b\})=\operatorname*{Op}\left(
{\textstyle\sum_{|\alpha|=1}}
\partial_{x}^{\alpha}f\otimes\partial_{\xi}^{\alpha}g\right)  .
\]
The composed operators $\operatorname*{Op}\nolimits_{\mathrm{BJ}%
}(a)\operatorname*{Op}\nolimits_{\mathrm{BJ}}(b)$ and $\operatorname*{Op}%
\nolimits_{\mathrm{BJ}}(b)\operatorname*{Op}\nolimits_{\mathrm{BJ}}(a)$ are
thus well defined, belong to $\mathcal{L(S}(\mathbb{R}^{n}),\mathcal{S}%
(\mathbb{R}^{n}))$, and are given by%
\begin{align*}
\operatorname*{Op}\nolimits_{\mathrm{BJ}}(a)\operatorname*{Op}%
\nolimits_{\mathrm{BJ}}(b)(u)  &  =(2\pi)^{-n/2}(\mathcal{F}^{-1}g\ast u)f\\
\operatorname*{Op}\nolimits_{\mathrm{BJ}}(b)\operatorname*{Op}%
\nolimits_{\mathrm{BJ}}(a)(u)  &  =(2\pi)^{-n/2}\mathcal{F}^{-1}g\ast(fu).
\end{align*}
It follows that
\begin{equation}
\lbrack\operatorname*{Op}\nolimits_{\mathrm{BJ}}(a),\operatorname*{Op}%
\nolimits_{\mathrm{BJ}}(b)]u=(2\pi)^{-n/2}\left[  (\mathcal{F}^{-1}g\ast
u)f-\mathcal{F}^{-1}g\ast(fu)\right]  . \label{opab2}%
\end{equation}
Let us now show property (\ref{condbj}), that is
\begin{equation}
\lbrack\operatorname*{Op}\nolimits_{\mathrm{BJ}}(a),\operatorname*{Op}%
\nolimits_{\mathrm{BJ}}(b)]u=i\operatorname*{Op}\left(
{\textstyle\sum_{|\alpha|=1}}
\partial_{x}^{\alpha}f\otimes\partial_{\xi}^{\alpha}g\right)  u \label{opab1}%
\end{equation}
for all $u\in\mathcal{S}(\mathbb{R}^{n})$.

Since $\widehat{\partial_{x_{j}}f}(\xi)=i\xi_{j}\widehat{f}(\xi)$, by
\eqref{abj} and \eqref{absinc} we have%
\begin{equation}
\operatorname*{Op}\nolimits_{\mathrm{BJ}}\left(
{\textstyle\sum_{|\alpha|=1}}
\partial_{x}^{\alpha}f\otimes\partial_{\xi}^{\alpha}g\right)  u(x)=(-2)(2\pi
)^{-n}I(x) \label{35bis}%
\end{equation}
where%
\[
I(x)=\int\widehat{f}(x^{\prime})\widehat{g}(\xi^{\prime})\sin(\tfrac{1}%
{2}\langle x^{\prime},\xi^{\prime}\rangle)e^{i(\langle x^{\prime}%
,x\rangle+\frac{1}{2}\langle x^{\prime},\xi^{\prime}\rangle)}u(x+\xi^{\prime
})\,dx^{\prime}d\xi^{\prime}.
\]
Writing $\sin t=(e^{it}-e^{-it})/2i$ we have $I(x)=I_{1}(x)+I_{2}(x)$ where%
\begin{align*}
I_{1}(x)  &  =\frac{1}{2i}\int\widehat{f}(x^{\prime})\widehat{g}(\xi^{\prime
})e^{i(\langle x^{\prime},x\rangle+\langle x^{\prime},\xi^{\prime}\rangle
)}u(x+\xi^{\prime})dx^{\prime}d\xi^{\prime}\\
I_{2}(x)  &  =-\frac{1}{2i}\int\widehat{f}(x^{\prime})\widehat{g}(\xi^{\prime
})e^{i\langle x^{\prime},x\rangle}u(x+\xi^{\prime})dx^{\prime}d\xi^{\prime}.
\end{align*}
Performing the change of variables $(x^{\prime},\xi^{\prime})\longmapsto
(x^{\prime\prime},\xi^{\prime\prime}-x)$ these integrals become%
\begin{align*}
I_{1}(x)  &  =\frac{1}{2i}\int\widehat{f}(x^{\prime\prime})\widehat{g}%
(\xi^{\prime\prime}-x)e^{i\langle x^{\prime\prime},\xi^{\prime\prime}\rangle
}u(\xi^{\prime\prime})\,dx^{\prime\prime}d\xi^{\prime\prime}\\
I_{2}(x)  &  =-\frac{1}{2i}\int\widehat{f}(x^{\prime\prime})\widehat{g}%
(\xi^{\prime\prime}-x^{\prime})e^{i\langle x^{\prime\prime},x\rangle}%
u(\xi^{\prime\prime})\,dx^{\prime\prime}d\xi^{\prime\prime}.
\end{align*}
Using successively the identity $\widehat{g}(\xi^{\prime\prime}-x^{\prime
})=\mathcal{F}^{-1}g(x-\xi^{\prime\prime})$, Fubini's theorem, and the Fourier
inversion formula we get the expressions%
\begin{align*}
I_{1}(x)  &  =\frac{(2\pi)^{n/2}}{2i}\left[  \mathcal{F}^{-1}g\ast(fu)\right]
(x)\\
I_{2}(x)  &  =-\frac{(2\pi)^{n/2}}{2i}f(x)[\mathcal{F}^{-1}g\ast u](x)
\end{align*}
and hence, by \eqref{35bis},%
\[
\operatorname*{Op}\nolimits_{\mathrm{BJ}}\left(
{\textstyle\sum_{|\alpha|=1}}
\partial_{x}^{\alpha}f\otimes\partial_{\xi}^{\alpha}g\right)  u=-i(2\pi
)^{-n/2}\left[  f(\mathcal{F}^{-1}g\ast u)-\mathcal{F}^{-1}g\ast(fu)\right]
.
\]
Together with \eqref{opab2} this proves the equality (\ref{opab1}).
\end{proof}

\medskip Let us call $h\in\mathcal{S}^{\prime}(\mathbb{R}^{2n})$ a
\textquotedblleft physical Hamiltonian\textquotedblright\ if $h(x,\xi
)=f(x)+g(\xi)$ with $f,g\in\mathcal{A}(\mathbb{R}^{n})$. The following
consequence of Theorem \ref{Theorem1} is straightforward:

\begin{corollary}
\label{corollary}Let $h$ and $k$ be physical Hamiltonians, and set
$H=\operatorname*{Op}\nolimits_{\mathrm{BJ}}(h)$, $K=\operatorname*{Op}%
\nolimits_{\mathrm{BJ}}(k)$. We have
\begin{equation}
\lbrack H,K]=i\operatorname*{Op}\nolimits_{\mathrm{BJ}}(\{h,k\}).
\label{condphys}%
\end{equation}

\end{corollary}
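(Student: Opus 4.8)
The plan is to deduce the corollary directly from Theorem \ref{Theorem1} using the bilinearity of the commutator and of the Poisson bracket, together with the linearity of $\operatorname*{Op}\nolimits_{\mathrm{BJ}}$. Write $h=f_1\otimes1+1\otimes g_1$ and $k=f_2\otimes1+1\otimes g_2$ with $f_1,f_2,g_1,g_2\in\mathcal{A}(\mathbb{R}^n)$, and set $H=\operatorname*{Op}\nolimits_{\mathrm{BJ}}(h)$, $K=\operatorname*{Op}\nolimits_{\mathrm{BJ}}(k)$. By linearity of the Born--Jordan quantization, $H=\operatorname*{Op}\nolimits_{\mathrm{BJ}}(f_1\otimes1)+\operatorname*{Op}\nolimits_{\mathrm{BJ}}(1\otimes g_1)$ and similarly for $K$; by Theorem \ref{Theorem1} each of these four summands lies in $\mathcal{L(S}(\mathbb{R}^n),\mathcal{S}(\mathbb{R}^n))$, hence so do $H$ and $K$, and their composition is well defined on $\mathcal{S}(\mathbb{R}^n)$.

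Next I would expand $[H,K]$ into the sum of four commutators, one for each pairing of a summand of $h$ with a summand of $k$. The two ``diagonal'' commutators $[\operatorname*{Op}\nolimits_{\mathrm{BJ}}(f_1\otimes1),\operatorname*{Op}\nolimits_{\mathrm{BJ}}(f_2\otimes1)]$ and $[\operatorname*{Op}\nolimits_{\mathrm{BJ}}(1\otimes g_1),\operatorname*{Op}\nolimits_{\mathrm{BJ}}(1\otimes g_2)]$ vanish: by \eqref{opabu} the first pair of operators are both multiplication operators (by $f_1$ and $f_2$) and hence commute, while the second pair are both Fourier multipliers and hence commute. For the two ``cross'' terms, Theorem \ref{Theorem1} applies directly: $[\operatorname*{Op}\nolimits_{\mathrm{BJ}}(f_1\otimes1),\operatorname*{Op}\nolimits_{\mathrm{BJ}}(1\otimes g_2)]=i\operatorname*{Op}\nolimits_{\mathrm{BJ}}(\{f_1\otimes1,1\otimes g_2\})$, and $[\operatorname*{Op}\nolimits_{\mathrm{BJ}}(1\otimes g_1),\operatorname*{Op}\nolimits_{\mathrm{BJ}}(f_2\otimes1)]=-[\operatorname*{Op}\nolimits_{\mathrm{BJ}}(f_2\otimes1),\operatorname*{Op}\nolimits_{\mathrm{BJ}}(1\otimes g_1)]=-i\operatorname*{Op}\nolimits_{\mathrm{BJ}}(\{f_2\otimes1,1\otimes g_1\})=i\operatorname*{Op}\nolimits_{\mathrm{BJ}}(\{1\otimes g_1,f_2\otimes1\})$, using the antisymmetry of both the commutator and the Poisson bracket.

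Finally I would collect these contributions and match them against $\{h,k\}$. Since the Poisson bracket is bilinear and $\{f_1\otimes1,f_2\otimes1\}=0=\{1\otimes g_1,1\otimes g_2\}$ (functions of $x$ alone, resp.\ of $\xi$ alone, Poisson-commute), we get $\{h,k\}=\{f_1\otimes1,1\otimes g_2\}+\{1\otimes g_1,f_2\otimes1\}$, which is precisely the sum of the two arguments appearing above. Applying linearity of $\operatorname*{Op}\nolimits_{\mathrm{BJ}}$ once more yields $[H,K]=i\operatorname*{Op}\nolimits_{\mathrm{BJ}}(\{h,k\})$, which is \eqref{condphys}. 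There is no real obstacle here: the only point deserving a line of justification is that the various operator compositions make sense on $\mathcal{S}(\mathbb{R}^n)$, which is guaranteed by the mapping property $\operatorname*{Op}\nolimits_{\mathrm{BJ}}(f\otimes1),\operatorname*{Op}\nolimits_{\mathrm{BJ}}(1\otimes g)\in\mathcal{L(S}(\mathbb{R}^n),\mathcal{S}(\mathbb{R}^n))$ established in Theorem \ref{Theorem1}; everything else is bookkeeping with bilinear, antisymmetric brackets.
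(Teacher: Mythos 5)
Your proposal is correct and follows essentially the same route as the paper: decompose $h$ and $k$ into their $x$-part and $\xi$-part, note the diagonal commutators vanish (both multiplications, resp.\ both Fourier multipliers, by \eqref{opabu}), and apply Theorem \ref{Theorem1} together with bilinearity and antisymmetry to the two cross terms. The only difference is that you spell out the antisymmetry step for the term with the roles of $x$- and $\xi$-symbols reversed, which the paper leaves implicit.
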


\begin{proof}
Writing $h(x,\xi)=f(x)+g(\xi)$ and $k(x,\xi)=d(x)+e(\xi)$ we have, using the
linearity of the Poisson bracket and the fact that $\{f,d\}=\{g,e\}=0$,%
\[
\{h,k\}=\{f,e\}+\{g,d\}.
\]
Let $F=\operatorname*{Op}\nolimits_{\mathrm{BJ}}(f)$, and so on. In view of
the equalities (\ref{opabu}) we have $FD=DF$ and $GE=EG$ and hence
\[
\lbrack H,K]=[F,E]+[G,D]
\]
formula (\ref{condphys}) follows since $[F,E]=i\operatorname*{Op}%
\nolimits_{\mathrm{BJ}}(\{f,e\})$ and $[G,D]=i\operatorname*{Op}%
\nolimits_{\mathrm{BJ}}(\{g,d\})$ in view of (\ref{condbj}).
\end{proof}

\subsection{The characteristic property of Born-Jordan operators}

We now prove a converse of Theorem \ref{Theorem1}.

Consider the space
\[
\mathcal{A}_{0}(\mathbb{R}^{n})=\{e^{i\langle x^{\prime},\cdot\rangle
}:\ x^{\prime}\in\mathbb{R}^{n}\}
\]
of purely imaginary exponentials in $\mathbb{R}^{n}$.

\begin{theorem}
\label{Theorem2} Let $\operatorname*{Op}:\mathcal{S}^{\prime}(\mathbb{R}%
^{2n})\longrightarrow\mathcal{L(S}(\mathbb{R}^{n}),\mathcal{S}^{\prime
}(\mathbb{R}^{n}))$ be such that%
\begin{equation}
\operatorname*{Op}(f\otimes1)u=fu,\quad\operatorname*{Op}(1\otimes
f)u=\mathcal{F}^{-1}(f\mathcal{F}u) \label{opone}%
\end{equation}
if $f\in\mathcal{A}_{0}(\mathbb{R}^{n})$, and
\begin{equation}
\lbrack\operatorname*{Op}(a),\operatorname*{Op}(b)]=i\operatorname*{Op}%
(\{a,b\}) \label{condop}%
\end{equation}
for all $a=f\otimes1$ with $f\in\mathcal{A}_{0}(\mathbb{R}^{n})$ and
$b=1\otimes g$ with $g\in\mathcal{A}_{0}(\mathbb{R}^{n})$.

Then
\[
\operatorname*{Op}(a)=\operatorname*{Op}\nolimits_{\mathrm{BJ}}(a)
\]
for all $a\in\mathcal{S}^{\prime}(\mathbb{R}^{2n})$.
\end{theorem}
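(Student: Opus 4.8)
The plan is to use the two hypotheses to determine $\operatorname{Op}$ explicitly on the exponentials $e^{i\langle z_{0},\cdot\rangle}$, $z_{0}=(x_{0},\xi_{0})\in\mathbb{R}^{2n}$, to see that this forces $\operatorname{Op}=\operatorname{Op}_{\mathrm{BJ}}$ on their linear span, and then to conclude by a weak-$\ast$ density argument, using linearity and the (standing) weak continuity of $\operatorname{Op}$ as a map into $\mathcal{L}(\mathcal{S}(\mathbb{R}^{n}),\mathcal{S}'(\mathbb{R}^{n}))$.

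Concretely, fix $x_{0},\xi_{0}\in\mathbb{R}^{n}$ and set $a=e^{i\langle x_{0},\cdot\rangle}\otimes1$, $b=1\otimes e^{i\langle\xi_{0},\cdot\rangle}$; since $e^{i\langle x_{0},\cdot\rangle},e^{i\langle\xi_{0},\cdot\rangle}\in\mathcal{A}_{0}(\mathbb{R}^{n})$ these are admissible in \eqref{opone}--\eqref{condop}. A one-line computation of the Poisson bracket gives $\{a,b\}=-\langle x_{0},\xi_{0}\rangle\,e^{i(\langle x_{0},x\rangle+\langle\xi_{0},\xi\rangle)}$. By \eqref{opone}, $A:=\operatorname{Op}(a)$ is multiplication by $e^{i\langle x_{0},\cdot\rangle}$ and $B:=\operatorname{Op}(b)$ is the translation $u\mapsto u(\cdot+\xi_{0})$, i.e. $A=M(x_{0},0)$ and $B=M(0,\xi_{0})$ by \eqref{M1}; the Baker--Campbell--Hausdorff relations \eqref{BKH} give $AB=e^{-\frac{i}{2}\langle x_{0},\xi_{0}\rangle}M(x_{0},\xi_{0})$ and $BA=e^{\frac{i}{2}\langle x_{0},\xi_{0}\rangle}M(x_{0},\xi_{0})$, hence
\[
[A,B]=-2i\sin(\tfrac{1}{2}\langle x_{0},\xi_{0}\rangle)\,M(x_{0},\xi_{0}).
\]
Inserting this and $\{a,b\}$ into \eqref{condop} and dividing by $-i\langle x_{0},\xi_{0}\rangle$ when $\langle x_{0},\xi_{0}\rangle\neq0$ gives
\[
\operatorname{Op}(e^{i(\langle x_{0},x\rangle+\langle\xi_{0},\xi\rangle)})=\operatorname{sinc}(\tfrac{1}{2}\langle x_{0},\xi_{0}\rangle)\,M(x_{0},\xi_{0}),
\]
which by \eqref{aggiunta} is exactly $\operatorname{Op}_{\mathrm{BJ}}(e^{i(\langle x_{0},x\rangle+\langle\xi_{0},\xi\rangle)})$. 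So $\operatorname{Op}$ and $\operatorname{Op}_{\mathrm{BJ}}$ agree on every exponential $e^{i\langle z_{0},\cdot\rangle}$ with $\langle x_{0},\xi_{0}\rangle\neq0$. (The remaining exponentials can be added directly from \eqref{opone} when $x_{0}=0$ or $\xi_{0}=0$, and by continuity, after a perturbation $\xi_{0}\mapsto\xi_{0}+\varepsilon x_{0}$, otherwise; but they will not be needed.)

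To pass from this family to all of $\mathcal{S}'(\mathbb{R}^{2n})$ I would argue as follows. The linear span $V$ of $\{e^{i\langle z_{0},\cdot\rangle}:z_{0}=(x_{0},\xi_{0}),\ \langle x_{0},\xi_{0}\rangle\neq0\}$ is weak-$\ast$ dense in $\mathcal{S}'(\mathbb{R}^{2n})$: any $\phi\in\mathcal{S}(\mathbb{R}^{2n})$ annihilating $V$ has $\widehat{\phi}$ vanishing on the open dense set $\{\langle x_{0},\xi_{0}\rangle\neq0\}$, hence $\widehat{\phi}\equiv0$, so $\phi=0$, and by the bipolar theorem $V$ is weak-$\ast$ dense. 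Now fix $u,v\in\mathcal{S}(\mathbb{R}^{n})$ and consider the linear functional $\Lambda(a)=\langle\operatorname{Op}(a)u,v\rangle-\langle\operatorname{Op}_{\mathrm{BJ}}(a)u,v\rangle$ on $\mathcal{S}'(\mathbb{R}^{2n})$. It is weak-$\ast$ continuous: for $\operatorname{Op}$ this is the assumed weak continuity; for $\operatorname{Op}_{\mathrm{BJ}}$ it follows from \eqref{abj}--\eqref{absinc}, which give $\langle\operatorname{Op}_{\mathrm{BJ}}(a)u,v\rangle=(2\pi)^{-n}\langle\widehat{a},\Phi_{u,v}\rangle$ with $\Phi_{u,v}(x',\xi')=\operatorname{sinc}(\tfrac{1}{2}\langle x',\xi'\rangle)\langle M(x',\xi')u,v\rangle$, and $\Phi_{u,v}\in\mathcal{S}(\mathbb{R}^{2n})$ since $(x',\xi')\mapsto\langle M(x',\xi')u,v\rangle$ is, up to a unimodular factor, a cross-ambiguity function of $u,v$ (hence Schwartz) while $\operatorname{sinc}(\tfrac{1}{2}\langle\cdot,\cdot\rangle)\in\mathcal{A}(\mathbb{R}^{2n})$. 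Thus $\Lambda$ vanishes on the weak-$\ast$ dense subspace $V$, hence $\Lambda\equiv0$; as $u,v$ are arbitrary, $\operatorname{Op}(a)=\operatorname{Op}_{\mathrm{BJ}}(a)$ for all $a\in\mathcal{S}'(\mathbb{R}^{2n})$.

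I expect the last step to be the only delicate one. One must make sure that the topology on $\mathcal{L}(\mathcal{S}(\mathbb{R}^{n}),\mathcal{S}'(\mathbb{R}^{n}))$ for which $\operatorname{Op}$ is continuous is the weak one, so that freezing $u,v$ really reduces the problem to weak-$\ast$ continuous linear functionals on $\mathcal{S}'(\mathbb{R}^{2n})$ --- for which \emph{vanishing on a dense subspace implies vanishing identically} is legitimate by Hahn--Banach --- and that $\operatorname{Op}_{\mathrm{BJ}}$ shares this continuity, which is where the explicit Fourier representation \eqref{abj} and the (standard but not entirely obvious) Schwartz regularity of $\Phi_{u,v}$ are needed. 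Everything preceding it is a short, explicit computation.
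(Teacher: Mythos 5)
Your proposal is correct and follows essentially the same route as the paper: conditions \eqref{opone}--\eqref{condop} combined with the Baker--Campbell--Hausdorff relations \eqref{BKH} force $\operatorname{Op}\bigl(e^{i(\langle x_{0},\cdot\rangle+\langle\xi_{0},\cdot\rangle)}\bigr)=\operatorname{sinc}(\tfrac{1}{2}\langle x_{0},\xi_{0}\rangle)M(x_{0},\xi_{0})$, which by \eqref{aggiunta} is the Born--Jordan value, and one concludes from the density of exponentials together with the standing linearity and weak continuity of both quantizations. The only divergence is in the closing density step --- the paper covers the degenerate exponentials ($\langle x_{0},\xi_{0}\rangle=0$) by continuity in $(x_{0},\xi_{0})$ and then invokes Lemma \ref{Lemma1}, while you discard them and show directly, via an annihilator/Fourier argument, that the span of the nondegenerate exponentials is already weak-$\ast$ dense, making the continuity of $a\mapsto\langle\operatorname{Op}_{\mathrm{BJ}}(a)u,v\rangle$ explicit through the ambiguity-function representation --- which is precisely the variant the paper itself sketches in Remark \ref{osservazione}.
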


We need the following known density result (we report on the short proof for
the sake of completeness).

\begin{lemma}
\label{Lemma1} The linear span of $\mathcal{A}_{0}(\mathbb{R}^{n})$ is dense
in $\mathcal{S}^{\prime}(\mathbb{R}^{n})$.
\end{lemma}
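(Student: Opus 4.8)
The plan is to argue by duality. Since $\mathcal{S}^{\prime}(\mathbb{R}^{n})$ carries the weak-$\ast$ topology (or its Mackey topology, under which the dual is still $\mathcal{S}(\mathbb{R}^{n})$), a linear subspace $V\subset\mathcal{S}^{\prime}(\mathbb{R}^{n})$ is dense if and only if the only $\varphi\in\mathcal{S}(\mathbb{R}^{n})$ with $\langle T,\varphi\rangle=0$ for all $T\in V$ is $\varphi=0$. So it suffices to take $\varphi\in\mathcal{S}(\mathbb{R}^{n})$ such that $\langle e^{i\langle x^{\prime},\cdot\rangle},\varphi\rangle=0$ for every $x^{\prime}\in\mathbb{R}^{n}$, and to conclude that $\varphi\equiv 0$.

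The key observation is that the pairing $\langle e^{i\langle x^{\prime},\cdot\rangle},\varphi\rangle=\int e^{i\langle x^{\prime},x\rangle}\varphi(x)\,dx$ is, up to the normalizing constant $(2\pi)^{n/2}$, exactly the inverse Fourier transform $\mathcal{F}^{-1}\varphi$ evaluated at $x^{\prime}$ (equivalently $\widehat{\varphi}(-x^{\prime})$). Hence the hypothesis says precisely that $\widehat{\varphi}$ vanishes identically on $\mathbb{R}^{n}$. Since the Fourier transform is a bijection on $\mathcal{S}(\mathbb{R}^{n})$, this forces $\varphi=0$, which is what we wanted. Thus the annihilator of $\mathrm{span}\,\mathcal{A}_0(\mathbb{R}^n)$ in $\mathcal{S}(\mathbb{R}^n)$ is trivial, and the span is weakly dense in $\mathcal{S}^{\prime}(\mathbb{R}^n)$.

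There is essentially no obstacle here; the only point requiring a word of care is the choice of topology on $\mathcal{S}^{\prime}(\mathbb{R}^{n})$ for which "density $\Leftrightarrow$ trivial annihilator" is valid. For the weak-$\ast$ topology this is immediate from the Hahn--Banach theorem, since the continuous dual of $(\mathcal{S}^{\prime},\text{weak-}\ast)$ is $\mathcal{S}$; and the notion of density used later (when passing from the values of $\operatorname*{Op}$ on exponentials to all of $\mathcal{S}^{\prime}(\mathbb{R}^{2n})$, in the proof of Theorem \ref{Theorem2}) is exactly this weak one, since the operators are only required to map $\mathcal{S}\to\mathcal{S}^{\prime}$ continuously in the weak sense. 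So the duality argument above suffices, and I would present it in just a few lines.
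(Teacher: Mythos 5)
Your proof is correct, but it goes by a genuinely different route than the paper. You argue abstractly by duality: the span of the exponentials is dense in $\mathcal{S}^{\prime}(\mathbb{R}^{n})$ once its annihilator in $\mathcal{S}(\mathbb{R}^{n})$ is trivial, and triviality is immediate because $\varphi\in\mathcal{S}(\mathbb{R}^{n})$ pairing to zero against every $e^{i\langle x^{\prime},\cdot\rangle}$ means $\widehat{\varphi}\equiv 0$, hence $\varphi=0$ by injectivity of the Fourier transform on $\mathcal{S}$. The paper instead argues constructively: since $\mathcal{F}$ is an isomorphism of $\mathcal{S}^{\prime}$, it suffices to show that the span of the Dirac masses $\delta_{x^{\prime}}$ is dense, and this is done by approximating any $f\in C^{\infty}_{c}(\mathbb{R}^{n})$ (itself dense in $\mathcal{S}^{\prime}$) by the Riemann sums $(1/k)^{n}\sum_{x^{\prime}\in(1/k)\mathbb{Z}^{n}}f(x^{\prime})\delta_{x^{\prime}}$. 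Your Hahn--Banach argument is shorter and avoids exhibiting approximants, at the price of a word about topologies: weak-$\ast$ density of a subspace upgrades to density in the strong (or Mackey) topology because closures of convex sets agree for all topologies compatible with the pairing $(\mathcal{S}^{\prime},\mathcal{S})$, using reflexivity of $\mathcal{S}$ — you do flag this point, though your justification (``the operators are only required to be weakly continuous'') slightly conflates the continuity of each operator $\operatorname{Op}(a)$ with that of the quantization map $a\mapsto\operatorname{Op}(a)$; the bipolar/compatible-topology remark is the cleaner way to settle it. The paper's Riemann-sum argument buys an explicit net of finite linear combinations converging to any given distribution, which is more concrete and sidesteps any discussion of which topology on $\mathcal{S}^{\prime}$ is meant; both proofs are fully adequate for the use made of the lemma in Theorem \ref{Theorem2}.
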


\begin{proof}
Since the Fourier transform is an isomorphism of $\mathcal{S}^{\prime
}(\mathbb{R}^{n})$, it is sufficient to prove that the linear span of the set
of Dirac delta functions $\delta_{x^{\prime}}$, $x^{\prime}\in\mathbb{R}^{n}$,
is dense in $\mathcal{S}^{\prime}(\mathbb{R}^{n})$. To this end, observe that
$C^{\infty}_{c}(\mathbb{R}^{n})$ is dense in $\mathcal{S}^{\prime}%
(\mathbb{R}^{n})$; on the other hand, every function $f\in C^{\infty}%
_{c}(\mathbb{R}^{n})$ is the limit in $\mathcal{S}^{\prime}(\mathbb{R}^{n})$
of the finite sums $(1/k)^{n}\sum_{x^{\prime}\in(1/k)\mathbb{Z}^{n}}
f(x^{\prime})\delta_{x^{\prime}}$ as $k\to+\infty$, as one sees by
approximating the pairing $\langle\langle f,\phi\rangle\rangle=\int
f(x)\phi(x)\, dx$, $\phi\in\mathcal{S}(\mathbb{R}^{n})$, by Riemann sums.
\end{proof}

\medskip

\begin{proof}
[Proof of Theorem \ref{Theorem2}]By Lemma \ref{Lemma1} (applied in dimension
$2n$) the exponentials $e^{i(\langle x^{\prime},\cdot\rangle+\langle
\xi^{\prime},\cdot\rangle)}$, $x^{\prime},\xi^{\prime}\in\mathbb{R}^{n}$, span
a dense subspace of $\mathcal{S}^{\prime}(\mathbb{R}^{2n})$. Since both the
quantizations $\operatorname*{Op}$ and $\operatorname*{Op}_{\mathrm{BJ}}$ are
linear and continuous on $\mathcal{S}^{\prime}(\mathbb{R}^{2n})$, it is
sufficient to prove that they coincide on $\mathcal{A}_{0}(\mathbb{R}^{2n})$.
By \eqref{aggiunta} this amounts to prove that
\begin{equation}
\operatorname*{Op}(e^{i(\langle x^{\prime},\cdot\rangle+\langle\xi^{\prime
},\cdot\rangle)})=\operatorname{sinc}(\tfrac{1}{2}\langle x^{\prime}%
,\xi^{\prime}\rangle)e^{i(\langle x^{\prime},x\rangle+\langle\xi^{\prime
},D\rangle)}. \label{eq2}%
\end{equation}
By \eqref{opone} we have%
\begin{equation}
\operatorname*{Op}(e^{i\langle x^{\prime},\cdot\rangle}\otimes
1)u(x)=e^{i\langle x^{\prime},x\rangle}u(x),\quad\operatorname*{Op}(1\otimes
e^{i\langle\xi^{\prime},\cdot\rangle})u(x)=e^{i\langle\xi^{\prime},D\rangle
}u(x) \label{opop}%
\end{equation}
for $u\in\mathcal{S}(\mathbb{R}^{n})$.

Assume now $\langle x^{\prime},\xi^{\prime}\rangle\neq0$. The condition
(\ref{condop}) implies%
\[
\lbrack\operatorname*{Op}(e^{i\langle x^{\prime},\cdot\rangle}\otimes
1),\operatorname*{Op}(1\otimes e^{i\langle\xi^{\prime},\cdot\rangle}%
)]=\frac{1}{i}\langle x^{\prime},\xi^{\prime}\rangle\operatorname*{Op}%
(e^{i(\langle x^{\prime},\cdot\rangle+\langle\xi^{\prime},\cdot\rangle)}),
\]
that is
\[
\operatorname*{Op}(e^{i(\langle x^{\prime},\cdot\rangle+\langle\xi^{\prime
},\cdot\rangle)})=\frac{i}{\langle x^{\prime},\xi^{\prime}\rangle
}[\operatorname*{Op}(e^{i\langle x^{\prime},\cdot\rangle}\otimes
1),\operatorname*{Op}(1\otimes e^{i\langle\xi^{\prime},\cdot\rangle})].
\]
In view of \eqref{opop} and (\ref{BKH}) we have%
\begin{align*}
\operatorname*{Op}(e^{i\langle x^{\prime},\cdot\rangle}\otimes
1)\operatorname*{Op}(1\otimes e^{i\langle\xi^{\prime},\cdot\rangle})  &
=e^{-\frac{1}{2i}\langle x^{\prime},\xi^{\prime}\rangle}e^{i(\langle
x^{\prime},x\rangle+\langle\xi^{\prime},D\rangle)}\\
\operatorname*{Op}( 1\otimes e^{i\langle\xi^{\prime},\cdot\rangle
})\operatorname*{Op}( e^{i\langle x^{\prime},\cdot\rangle}\otimes1)  &
=e^{\frac{1}{2i}\langle x^{\prime},\xi^{\prime}\rangle}e^{i(\langle x^{\prime
},x\rangle+\langle\xi^{\prime},D\rangle)}%
\end{align*}
and hence%
\[
\operatorname*{Op}(e^{i(\langle x^{\prime},\cdot\rangle+\langle\xi^{\prime
},\cdot\rangle)})=\frac{i}{\langle x^{\prime},\xi^{\prime}\rangle}\left(
e^{-\frac{1}{2i}\langle x^{\prime},\xi^{\prime}\rangle}-e^{\frac{1}{2i}\langle
x^{\prime},\xi^{\prime}\rangle}\right)  e^{i(\langle x^{\prime},x\rangle
+\langle\xi^{\prime},D\rangle)}%
\]
which is (\ref{eq2}).

The case $\langle x^{\prime},\xi^{\prime}\rangle=0$ follows by continuity,
because both sides of (\ref{eq2}) are continuous functions of $x^{\prime}%
,\xi^{\prime}$ valued in $\mathcal{L}(\mathcal{S}(\mathbb{R}^{n}%
),\mathcal{S}^{\prime}(\mathbb{R}^{n}))$.

This concludes the proof.
\end{proof}

\begin{remark}
\label{osservazione} As an alternative, since we already proved in Theorem
\ref{Theorem1} that the Born-Jordan quantization enjoys the properties in the
statement of Theorem \ref{Theorem2}, in the proof of the latter we could limit
ourselves to showing that there is at most one quantization $\mathrm{Op}%
:\mathcal{S}^{\prime}(\mathbb{R}^{2n})\longrightarrow\mathcal{L(S}%
(\mathbb{R}^{n}),\mathcal{S}^{\prime}(\mathbb{R}^{n}))$ satisfying those
properties. Now, those conditions force the values $\mathrm{Op}(a)$ when $a$
is a symbol of the type
\begin{equation}
e^{i(\langle x^{\prime},\cdot\rangle+\langle\xi^{\prime},\cdot\rangle)}%
,\quad\langle x^{\prime},\xi^{\prime}\rangle\not =0,\label{oss1}%
\end{equation}
because, as we saw, such symbols can be written (up to a multiplicative
constant) as the Poisson bracket of symbols in $\mathcal{A}_{0}(\mathbb{R}%
^{n})$. On the other hand, symbols of the type \eqref{oss1} span a dense
subset of $\mathcal{S}^{\prime}(\mathbb{R}^{2n})$ (this is true by Lemma
\ref{Lemma1} without the condition \textquotedblleft$\langle x^{\prime}%
,\xi^{\prime}\rangle\not =0$\textquotedblright, but also under this additional
condition, because those exponential functions are continuous, as functions of
$x^{\prime},\xi^{\prime}$, valued in $\mathcal{S}^{\prime}(\mathbb{R}^{2n})$
and the set of $(x',\xi')\in\mathbb{R}^{2n}$ such that $\langle x^{\prime},\xi^{\prime
}\rangle\not =0$ is dense in $\mathbb{R}^{2n}$.
\end{remark}

\section{Discussion\label{secdisc}}

The original concept of quantization in physics consists in trying to assign
to \textquotedblleft observables\textquotedblright\ (= real valued symbols) on
$\mathbb{R}^{2n}$ self-adjoint operators on a Hilbert space (usually
$L^{2}(\mathbb{R}^{2n})$) according to certain rules, dictated by physical
considerations. Mathematically speaking, this amounts in constructing a
continuous mapping $\operatorname*{Op}$ from some Poisson algebra of functions
defined on $\mathbb{R}^{2n}$ and such that:

\medskip(i) The operators $\operatorname*{Op}(x_{j})$ and $\operatorname*{Op}%
(\xi_{j})$ are given by $\operatorname*{Op}(x_{j})u=x_{j}u$ and
$\operatorname*{Op}(\xi_{j})u=D_{j}u$;

\medskip(ii) $[\operatorname*{Op}(a),\operatorname*{Op}%
(b)]=i\operatorname*{Op}(\{a,b\})$ (when $[\operatorname*{Op}%
(a),\operatorname*{Op}(b)]$ exists).

\medskip\noindent These rules are often complemented by other conditions, for instance, the
\textquotedblleft von Neumann\textquotedblright\ rule

\medskip(iii) $\operatorname*{Op}(\phi\circ a)=\phi(\operatorname*{Op}(a))$
where $\phi$ is a real function for which $\phi(\operatorname*{Op}(a))$ is defined.

\medskip Suppose now that the symbols $a$ and $b$ are quadratic polynomials:
$a(x,\xi)=\frac{1}{2}\langle M_{a}z,z\rangle$, $b(x,\xi)=\frac{1}{2}\langle
M_{b}z,z\rangle$, $z=(x,\xi)$, $M_{a}$ and $M_{b}$ being symmetric matrices.
The flows of the corresponding Hamiltonian vector fields $X_{a}$ and $X_{b}$
are linear hence consist of one-parameter subgroups $(S_{a}^{t})_{t\in
\mathbb{R}}$ and $(S_{b}^{t})_{t\in\mathbb{R}}$ of the symplectic group
$\operatorname*{Sp}(n)$. Using the path-lifting theorem it follows that we can
lift, in a unique way $(S_{a}^{t})_{t\in\mathbb{R}}$ and $(S_{b}^{t}%
)_{t\in\mathbb{R}}$ to one-parameter subgroups of any of the covering groups
$\operatorname*{Sp}_{q}(n)$ of $\operatorname*{Sp}(n)$. Choosing $q=2$ and
identifying $\operatorname*{Sp}_{2}(n)$ with the metaplectic group
$\operatorname*{Mp}(n)$, we obtain two one-parameter subgroups $(\widehat
{S}_{a}^{t})_{t\in\mathbb{R}}$ and $(\widehat{S}_{b}^{t})_{t\in\mathbb{R}}$ of
unitary operator acting on $L^{2}(\mathbb{R}^{n})$. It now requires some
calculations to show that
\[
i\frac{d}{dt}\widehat{S}_{a}^{t}=A(x,D)\widehat{S}_{a}^{t}%
\]
where the symmetric operator $A(x,D)$ is formally given by
\[
A(x,D)=\frac{1}{2}\langle M_{a}(x,D),(x,D)\rangle=\operatorname*{Op}%
\nolimits_{\mathrm{W}}(a).
\]
(and likewise for $\widehat{S}_{b}^{t}$); a few more calculations \cite{GS,62}
then show we have%
\[
\lbrack\operatorname*{Op}\nolimits_{\mathrm{W}}(a),\operatorname*{Op}%
\nolimits_{\mathrm{W}}(b)]=i\operatorname*{Op}\nolimits_{\mathrm{W}}(\{a,b\})
\]
which is condition (ii). (The latter is easily extended to non-homogeneous
quadratic polynomials). Now, the essence of the Groenewold--Van Hove no-go
result is that there exists no quantization $\operatorname*{Op}$ whose
restriction to the Poisson algebra of quadratic polynomials coincides with
$\operatorname*{Op}\nolimits_{\mathrm{W}}$ and still satisfies (ii).

It suffices in fact to show that (ii) cannot hold for the Poisson algebra of
polynomials; the proof then boils down to the following observation. Consider
the monomial $x^{2}\xi^{2}$; it can be written in two different ways using
Poisson brackets, namely%
\begin{equation}
x^{2}\xi^{2}=\frac{1}{9}\{x^{3},\xi^{3}\}=\frac{1}{3}\{x^{2}\xi,x\xi^{2}\}.
\label{x2ksi2}%
\end{equation}
Let us assume that such an $\operatorname*{Op}$ exists; then we would have
\[
\frac{1}{9}\operatorname*{Op}(\{x^{3},\xi^{3}\})=\frac{1}{3}\operatorname*{Op}%
(\{x^{2}\xi,x\xi^{2}\})
\]
and hence%
\begin{equation}
\frac{1}{9i}[\operatorname*{Op}(x^{3}),\operatorname*{Op}(\xi^{3})]=\frac
{1}{3i}[\operatorname*{Op}(x^{2}\xi),\operatorname*{Op}(x\xi^{2})].
\label{disc1}%
\end{equation}
Assuming $\operatorname*{Op}(x^{3})$ is multiplication by $x^{3}$ and
$\operatorname*{Op}(\xi^{3})=D^{3}$ we get, after some calculations%
\begin{equation}
\frac{1}{9i}[\operatorname*{Op}(x^{3}),\operatorname*{Op}(\xi^{3})]=x^{2}%
D^{2}-2ixD-\frac{2}{3} \label{disc2}%
\end{equation}
and similarly, writing
\[
\mathrm{Op}(x^{2}\xi)=\frac{1}{6i}[\mathrm{Op}(x^{3}),\mathrm{Op}(\xi
^{2})],\quad\mathrm{Op}(x\xi^{2})=\frac{1}{6i}[\mathrm{Op}(x^{2}%
),\mathrm{Op}(\xi^{3})]
\]
we obtain
\begin{equation}
\frac{1}{3i}[\operatorname*{Op}(x^{2}\xi),\operatorname*{Op}(x\xi^{2}%
)]=x^{2}D^{2}-2ixD-\frac{1}{3} \label{disc3}%
\end{equation}
hence a contradiction. Now comes the crucial point: the conflict between
(\ref{disc2}) and (\ref{disc3}) disappears when one chooses
$\operatorname*{Op}=\operatorname*{Op}_{\mathrm{BJ}}$ and one requires a
weaker form of (ii), namely

\medskip(ii')\quad $[\operatorname*{Op}(a),\operatorname*{Op}%
(b)]=i\operatorname*{Op}(\{a,b\})$ for all $a=f\oplus g$ and $b=h\oplus k$.

\medskip\noindent This weaker rule indeed excludes the condition (\ref{disc1})
and in fact we have%
\[
\operatorname*{Op}\nolimits_{\mathrm{BJ}}(x^{2}\xi^{2})=x^{2}D^{2}%
-2ixD-\frac{2}{3},
\]
which agrees with \eqref{disc2}.

Since $\operatorname*{Op}\nolimits_{\mathrm{BJ}}(a)=\operatorname*{Op}%
\nolimits_{\mathrm{WJ}}(a)$ for all quadratic symbols (see
\cite{transam,Springer,golu}), Born--Jordan quantization really is the most
natural quantization. In view of Corollary \ref{corollary} we moreover have
\begin{equation}
\lbrack H,K]=i\operatorname*{Op}\nolimits_{\mathrm{BJ}}(\{h,k\})
\end{equation}
for all symbols of the type $h(x,\xi)=f(x)+g(\xi)$ and $k(x,\xi)=d(x)+e(\xi)$;
from a physical point of view this means that the Dirac correspondence holds
for all Hamiltonians of the traditional type \textquotedblleft kinetic energy
plus potential\textquotedblright, and this result does not hold for any other
quantization; in particular if one replaces $\operatorname*{Op}%
\nolimits_{\mathrm{BJ}}$ with the Weyl correspondence $\operatorname*{Op}%
\nolimits_{\mathrm{W}}$ we have generally%
\begin{equation}
\lbrack H,K]\neq i\operatorname*{Op}\nolimits_{\mathrm{W}}(\{h,k\}).
\end{equation}

\medskip\noindent\textbf{Acknowledgement} \textit{The \ first author has been
supported by an Austrian Science Fund (FWF) Grant (project number P27773).}

\medskip

\end{document}